\newcommand{\F}{\mathcal F}
\def\pont{\hspace{-5pt}{\bf.\ }}
\begin{document}
\title{\large The Szemer\'edi-Petruska conjecture for a few small values}

\author{\it\large Adam S. Jobson\; \\ Andr\'e E. K\'ezdy\; \\ Jen\H{o} Lehel}

\institute{Adam S. Jobson \at
              Department of Mathematics\\
              University of Louisville,  Louisville, KY 40292\\            
              \email{adam.jobson@louisville.edu}     
  \and
           Andr\'e E. K\'ezdy \at
           Department of Mathematics\\
           University of Louisville,  Louisville, KY 40292\\            
              \email{kezdy@louisville.edu}   
   \and
           Jen\H{o} Lehel \at                                 
              Alfr\'ed R\'enyi Mathematical Institute\\ 
              Budapest, Hungary\\
              and\\
              Department of Mathematics\\
           University of Louisville,  Louisville, KY 40292\\    
              \email{lehelj@renyi.hu}  
}

\date{}
\maketitle
\begin{abstract}\pont  Let $H$ be a $3$-uniform hypergraph of order $n$ with clique number $\omega(H)=k$ such that the intersection of all  maximum cliques of $H$  is empty. For fixed $m=n-k$, Szemer\'edi and Petruska conjectured the sharp bound $n\leq {m+2\choose 2}$. In this note the conjecture is verified for $m=2,3$ and $4$. 
 \subclass{05D05\and 05D15\and  05C65 }
\end{abstract}
\vspace{2em}
\section{}


 Let $H$ be a $3$-uniform hypergraph of order $n$ with clique number $\omega(H)=k$ such that
the intersection of all  maximum cliques of $H$  is empty. For fixed $m=n-k$ Szemer\'edi and Petruska \cite{SzP} conjectured the tight bound $n\leq {m+2\choose 2}$. The construction showing the tightness of the bound  is also conjectured to be unique. 
 
 The Szemer\'edi and Petruska conjecture is equivalent with the statement that ${m+2}\choose{2}$ is the maximum order of  a $3$-uniform $\tau$-critical hypergraph\footnote{A hypergraph is $\tau$-critical if it has no isolated vertex and the removal of every edge decreases its transversal number}, see  \cite[Problem 18.(a)]{Tu85} with transversal number $m$. For the maximum order Tuza\footnote{Personal communication}   obtained the best known bound  $\frac{3}{4}m^2 + m + 1$ using the
machinery of $\tau$-critical hypergraphs. 
 
 An alternative approach  is proposed by  Jobson et al. 
 \cite{JKP}, combining a
decomposition process introduced by Szemer\'edi and Petruska \cite{SzP} with the skew version
of Bollob\'as's theorem \cite{BB}, 
 and using tools from linear algebra.
 
It turns out that the Szemer\'edi-Petruska conjecture has applications in extremal problems concerning convex sets in the plane, see Jobson et al. \cite{eck} and \cite{petrus}. The validity of the Szemer\'edi-Petruska conjecture for small values carries relevant information pertaining to those combinatorial geometry problems.

\section{} 
We prove here the Szemer\'edi-Petruska conjecture for $m=2,3$, and $4$.

\begin{proposition}\pont 
\label{main} {
Let $m=2,3$, or $4$, and $n>m$. If $H$ is a $3$-uniform hypergraph of order $n$ with clique number $\omega(H)=n-m=k\geq 3$, and  the intersection  of 
the  $k$-cliques of $H$ 
 is empty, then $n\leq {m+2\choose 2}$.}
\end{proposition}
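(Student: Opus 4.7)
The plan is to apply the Szemer\'edi--Petruska decomposition (developed further in \cite{JKP}) to the hypergraph $H$. From the hypothesis that the intersection of all $k$-cliques is empty, one extracts a minimal subfamily $K_1,\ldots,K_t$ of maximum cliques with empty intersection, together with distinguished vertices $v_i$ satisfying $v_i\notin K_i$ but $v_i\in K_j$ for all $j\neq i$. Setting $M_i=V(H)\setminus K_i$, each $|M_i|=m$, and the resulting \nmsystem\ is governed by the Bollob\'as set-pair inequality \cite{BB} applied to the pairs $(\{v_i\},K_i)$, which caps $t$ and constrains the intersection pattern of the $M_i$ in terms of $n$ and $m$.

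Step two is to combine the skew Bollob\'as inequality with the $3$-uniformity of $H$ to derive numerical constraints on $n$, $t$, and $m$, thereby recovering Tuza's known bound $n\le \tfrac{3}{4}m^2+m+1$. For $m=2$ this yields $n\le 6=\binom{4}{2}$ directly, and for $m=3$ it yields $n\le \lfloor 10.75\rfloor=10=\binom{5}{2}$; hence the cases $m=2,3$ of the proposition essentially follow from Tuza's estimate with only routine bookkeeping to do.

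The principal obstacle is $m=4$, where Tuza's inequality gives only $n\le 17$ while the target is $n\le 15$. To close this gap I would argue by contradiction, assuming $n\in\{16,17\}$, and analyse the $4$-element sets $M_1,\ldots,M_t$ in detail: they must cover $V(H)$, the skew Bollob\'as inequality caps $t$, every triple inside each $K_i$ must be an edge, and the identity $\omega(H)=n-4$ forbids any five vertices from inducing a clique, severely restricting how the $M_i$ may overlap. A case analysis organised by the intersection pattern of the $M_i$ (e.g.\ the number of pairwise disjoint ones, the maximum pairwise intersection, and whether some vertex lies in many $M_i$) should force a forbidden clique or violate the Bollob\'as bound in each branch. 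The chief difficulty will be keeping this $m=4$ case analysis simultaneously exhaustive and compact, since the limited slack in Tuza's inequality permits several borderline configurations, each of which has to be excluded individually.
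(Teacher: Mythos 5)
There is a genuine gap, in two places. First, your cases $m=2,3$ rest entirely on ``recovering'' Tuza's bound $n\le\tfrac34 m^2+m+1$, but you never derive it: you only assert that combining the skew Bollob\'as inequality with $3$-uniformity yields it. That bound is an unpublished result (cited in the paper only as a personal communication) obtained through the machinery of $\tau$-critical hypergraphs, and it does not fall out of the set-pair system you describe. Indeed, the pairs $(\{v_i\},K_i)$ you propose to feed into Bollob\'as's inequality only give $t\le k+1$, which says nothing useful about $n$ in terms of $m$; the productive set-pair system here is the $(2,m)$-system formed by the Szemer\'edi--Petruska \emph{private pairs} $p_i\subset N_i$ against the complements $M_i=V\setminus N_i$, and even that inequality alone gives $\ell\le\binom{m+2}{2}$, a bound on the number of cliques rather than on $n$. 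Until you supply an actual proof of the $\tfrac34 m^2+m+1$ estimate (or some substitute), $m=2$ and $m=3$ are not established.

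Second, for $m=4$ your proposed case analysis ``organised by the intersection pattern of the $M_i$'' has no finiteness mechanism: with $t$ potentially as large as $15$ and each $M_i$ a $4$-set in a $16$- or $17$-point universe, the space of intersection patterns is enormous, and you give no structural reduction that makes the enumeration exhaustive and compact. (Also, the assertion that $\omega(H)=n-4$ ``forbids any five vertices from inducing a clique'' is not correct as stated; it forbids a clique on $k+1=n-3$ vertices.) The paper's route supplies exactly the missing organizing device: it forms the graph $G$ of private pairs, assigns each edge the weight $w(p_j)=m-\tau(G\setminus p_j)$, proves $|V|\le|V_0(G)|+w(G)$, and then splits on $\tau(G)$. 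The extremes $\tau(G)=2$ and $\tau(G)\in\{m,m+1\}$ are handled by direct optimization, the Erd\H{o}s--Gallai bound, and the Gy\'arf\'as--Lehel ``order plus size'' bound for $\tau$-critical graphs, which already settles $m=2,3$ without Tuza's estimate; the single residual case $m=4$, $\tau(G)=3$ reduces to the four $\tau$-critical graphs with transversal number $3$, a genuinely finite list, with one borderline configuration eliminated by a Triples test inside the hypergraph. You would need to import some comparable reduction for your plan to be executable.
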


\noindent{\it Proof.} 
Let  ${\cal N} = \{N_1,\ldots,N_\ell\}$ ($\ell\geq 3$) be a collection of $k$-cliques of  $H=(V,E)$
 such that
 $\bigcap_{i=1}^\ell N_i = \varnothing$, but $\bigcap_{j\neq i} N_j \neq \varnothing$ for all $i=1,\ldots,\ell$. W.l.o.g. we assume that
 $\bigcup_{i=1}^\ell  N_j =V$, and $|V|=n$.
 Szemer\'edi and Petruska \cite[Lemma 4]{SzP} observed that
each $N_i$, $i=1,\ldots,\ell$, 
contains a {\it private pair} $p_i\subset V$, $|p_i|=2$, such that $p_i\subset N_j$ if and only if $i=j$.
Let $G=(V,E)$ be the graph of the private pairs $p_i=\{a_i,b_i\}$, $1\leq i\leq \ell$, as its edges. 

Set $M_i=V\setminus N_i$, and  notice that $|M_i|=n-k=m$.
By the definition of private pairs, $(p_i,M_i) $, $1\leq i\leq \ell$, form an intersecting
$(2,m)$-system:
$p_i\cap M_j=\varnothing $ if and only if $i=j$. Furthermore,  
 $\bigcup  p_i\subseteq \bigcup M_i=V$ (since $\bigcap N_i=\varnothing$).

  For an edge $p_j=\{a_j,b_j\}$ of  $ G$, define the {\it truncated subgraph} $G\setminus p_j$ on vertex set $V\setminus\{a_j,b_j\}$  by including
 $p_h\setminus \{a_j,b_j\}$ as an edge or a loop, for all
$h\neq j$. 
Since $M_j\cap p_h\neq \varnothing$ for all $h\neq j$,  we have 
\begin{equation}
\label{inter}
\tau(G\setminus p_j)\leq |M_j|=m,
\end{equation}
 for every $1\leq j\leq\ell$.
 For each edge $p_j\in G$ we define the {\it weight} 
$w(p_j)= m-\tau(G\setminus p_j)$. 
Let $G-p_j$ denote the graph obtained by the removal of edge $p_j$ from $G$ (and nothing else). Obviously we have
\begin{equation}
\label{subtau}
\tau(G)-1\leq\tau(G-p_j)\leq \tau(G\setminus p_j)\leq m.
\end{equation}
The conditions $V=\bigcup\limits_{i=1}^\ell M_i$, $\ell\geq 3$, and (\ref{subtau}) imply that $2\leq \tau(G)\leq m+1$. Furthermore,
$
0\leq w(p_j)=m-\tau(G\setminus p_j)\leq m-\tau(G)+1.
$

Let $V_0(G)=\bigcup\limits_{p\in E} p$, set  $Z(G)=V\setminus V_0(G)$ and $w(G)=\sum\limits_{p\in E}  w(p)$. 

\begin{lemma}\pont
\label{estime}
$\quad |V|= |V_0(G)|+|Z(G)|\leq |V_0(G)|+w(G).$
\end{lemma}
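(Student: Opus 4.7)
The plan is to show the equivalent inequality $|Z(G)|\le w(G)$, since $V_0(G)$ and $Z(G)$ partition $V$. The strategy is to exhibit, for each edge $p_j$, a transversal of $G\setminus p_j$ whose size witnesses a lower bound on $w(p_j)$ in terms of how many isolated vertices sit in $M_j$, and then to double-count.

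First I would use the intersecting property of the $(2,m)$-system. Since $p_h\cap M_j\ne\varnothing$ for every $h\ne j$ and $M_j\cap\{a_j,b_j\}=\varnothing$, the set $M_j\subseteq V\setminus\{a_j,b_j\}$ is a transversal of the truncated subgraph $G\setminus p_j$. The key observation is that a vertex $z\in Z(G)$ is isolated not only in $G$ but also in $G\setminus p_j$, since $z\notin p_h$ for any $h$. Consequently $z$ contributes nothing to $M_j$ as a transversal, and $M_j\setminus Z(G)$ is still a transversal of $G\setminus p_j$. This yields
\begin{equation*}
\tau(G\setminus p_j)\le |M_j\setminus Z(G)|=m-|M_j\cap Z(G)|,
\end{equation*}
so that $w(p_j)=m-\tau(G\setminus p_j)\ge |M_j\cap Z(G)|$ for every $j$.

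Summing over $j$ and switching the order of summation gives
\begin{equation*}
w(G)=\sum_{j=1}^{\ell}w(p_j)\ge \sum_{j=1}^{\ell}|M_j\cap Z(G)|=\sum_{z\in Z(G)}|\{j:z\in M_j\}|.
\end{equation*}
Now I would invoke the hypothesis $V=\bigcup_{i=1}^\ell M_i$: every $z\in Z(G)$ lies in at least one $M_j$, so each term in the last sum is at least $1$. Hence $w(G)\ge |Z(G)|$, which is the desired bound.

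The only step that requires any thought is the observation that isolated vertices of $G$ remain isolated in every truncated graph $G\setminus p_j$; once that is in hand the argument is just a single exchange in a transversal followed by double counting. The proof uses only the intersecting $(2,m)$-system structure and the covering property $V=\bigcup M_i$, so nothing in it is sensitive to the small values $m=2,3,4$.
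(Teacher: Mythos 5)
Your proof is correct and is essentially the paper's own argument: your inequality $w(p_j)\ge |M_j\cap Z(G)|$ is exactly the paper's $|Z_j|\le w(p_j)$ (since $M_j\setminus Z(G)=M_j\cap V_0(G)$ is the transversal of $G\setminus p_j$ used there), and the concluding summation over $j$ together with the covering property $V=\bigcup M_i$ is the same double count. No gaps.
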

\begin{proof} \pont
Set $V_0=V_0(G)$, and  $Z_j=M_j\setminus V_0$. 
Observe that (\ref{inter}) implies
$|M_j\cap V_0|\geq \tau(G\setminus p_j)$, hence
$|Z_j|\leq (|M_j|- |M_j\cap V_0|)\leq m-\tau(G\setminus p_j)=w(p_j).$ In words, $M_j$ has at most $w(p_j)$ vertices not in $V_0$. 
Thus $|Z(G)| \leq \sum\limits_{j=1}^\ell |Z_j|\leq
\sum\limits_{j=1}^\ell  w(p_j)=w(G),$ and the claim follows.\qed
\end{proof}

\noindent {\it Step 1:} $\tau(G)=2$. 
Let $\{u,v\}$ be a transversal set of $G$, set $V_0=V_0(G)$, and 
let $X=\{a\in V_0\setminus\{v\} : ua\in E\}$ and $Y=\{a\in V_0\setminus\{u\} : va\in E\}$. Set
 $x=|X|$, 
 $y=|Y|$, and  $z=|X\cap Y|$.  

   Case A:  $Y\subseteq X$.
   In this case  $|V_0|=|X\cup\{u,v\}|=x+2$, and 
   
  If $q=uv\in E$, then  $w(q)=m-|X|=m-x$, otherwise set $w(q)=0$;

$\tau(G\setminus ua)=|X\setminus\{a\}|=x-1$, $w(ua)=m-x+1$ if  $a\in X\setminus Y$;

$\tau(G\setminus ua)=|(X\setminus\{a\})\cup\{v\}|=x$, $w(ua)=m-x$ if  $a\in Y$;

$\tau(G\setminus va)=|(Y\setminus\{a\})\cup\{u\}|=y$, $w(va)=m-y$ if  $a\in Y$. 

The bound in Lemma \ref{estime} becomes 
  \begin{eqnarray*}
|V|&\leq& |V_0|+w(G)=(x+2)+w(q)+\sum\limits_{p\neq q} w(p)\\
&\leq& (x+2)+(m-x)+(x-y)(m-x+1)+y(m-x)+y(m-y)\\
&=&m+2+ x(m-x+1) + y(m-y-1)\\
&\leq&m+2+
 \left\lfloor\frac{m+1}{2}\right\rfloor\cdot\left\lceil\frac{m+1}{2}\right\rceil+
 \left\lfloor\frac{m-1}{2}\right\rfloor\cdot\left\lceil\frac{m-1}{2}\right\rceil
\leq {m+2\choose 2}.
\end{eqnarray*}

  Case B: $X\setminus Y\neq \varnothing$ and $Y\setminus X\neq \varnothing$. 
     In this case  $|V_0|=x+y-z+2$, and if $q=uv\in E$,
 then  $\tau(G\setminus q)=x+y-z$, $w(q)=m-x-y+z$; furthermore,  
$\tau(G\setminus ua)=x$, $w(ua)=m-x$ if  $a\in X$, and
$\tau(G\setminus va)=y$, $w(va)=m-y$ if  $a\in Y$. 
Thus by applying Lemma \ref{estime} we obtain the bound 
\begin{eqnarray*}
|V|&\leq & (x+y-z+2)+(m-x-y+z)+x(m-x)+y(m-y)\\
&\leq& m+ 2+2 \left\lfloor\frac{m}{2}\right\rfloor\cdot\left\lceil\frac{m}{2}\right\rceil
\leq {m+2\choose 2}.
\end{eqnarray*}
 
Therefore, if  $\tau(G)=2$ then $n\leq {m+2\choose 2}$ follows for every 
$m\geq 2$.
\vspace{.5em}

\noindent {\it Step 2:} $\tau(G)=m+1$ or $m$.


Assume $\tau(G)=m+1$.  
By (\ref{subtau}),
$0\leq w(p_i)=m- \tau(G\setminus p_i)\leq m-\tau(G-p_i)$, for every $1\leq i\leq\ell$, thus we have $\tau(G-p_i)\leq m$. Therefore, $G$  is a $\tau$-critical graph. The vertex bound 
for $\tau$-critical graphs due to Erd\H{o}s and Gallai \cite{EG} implies $|V|\leq 2(m+1)$. For $m\geq 2$, $|V|\leq 2(m+1)\leq {m+2\choose 2}$ follows. 

Assume now that $\tau(G)=m$.   Notice that $w(p_i)=0$ means $M_i\subset V_0(G)$. 
By successively removing  edges from $G$ 
with $w(p_i)=0$ 
we conclude with  a subgraph $G^*$ such that $\tau(G^*)=m$ and $0<w(p)=m-\tau(G^*\setminus p)\leq m-\tau(G^*- p)$, for every $p\in E^*$. Hence $G^*$ is $\tau$-critical graph. We claim that  
 $|V|\leq |V^*|+|E^*|$, and then $|V|\leq {m+2\choose 2}$ follows from the combined bound 
$|V^*|+|E^*|\leq {m+2\choose 2}$ due to Gy\'arf\'as and Lehel \cite{GYL}. The claim follows by repeatedly applying the next lemma.

\begin{lemma}\pont
\label{w=0} 
If $\ell\geq 3$ and $w(p_\ell)=0$, then 
$G^\prime=G-p_\ell$ satisfies
\begin{equation}
\label{increase}
 |V_0(G)|+w(G)\leq |V_0(G^\prime)|+w(G^\prime).
\end{equation}
\end{lemma}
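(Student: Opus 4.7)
The plan is to rewrite (\ref{increase}) as an inequality on transversal numbers of the truncated subgraphs and then carry out a short case analysis on how many endpoints of $p_\ell$ disappear from the edge-covered vertex set. Setting $\tau_h=\tau(G\setminus p_h)$ and $\tau_h'=\tau(G'\setminus p_h)$, the hypothesis $w(p_\ell)=0$ simplifies the change in total weight to
\[
w(G')-w(G)=\sum_{h=1}^{\ell-1}(\tau_h-\tau_h'),
\]
so (\ref{increase}) becomes $|V_0(G)|-|V_0(G')|\le \sum_{h<\ell}(\tau_h-\tau_h')$. Two easy remarks frame what follows: first, $V_0(G)\setminus V_0(G')\subseteq\{a_\ell,b_\ell\}$, so the left side lies in $\{0,1,2\}$; second, $G'\setminus p_h$ differs from $G\setminus p_h$ only by the deletion of the single edge or loop $p_\ell\setminus p_h$, so every term on the right is $0$ or $1$.

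If $V_0(G)=V_0(G')$ there is nothing to prove. If exactly one endpoint of $p_\ell$, say $a_\ell$, is missing from $V_0(G')$, then $a_\ell$ lies in no $p_h$ with $h<\ell$, whereas $b_\ell\in V_0(G')$ must belong to some $p_j$ with $j<\ell$. For that index $p_\ell\setminus p_j=\{a_\ell\}$ is a loop in $G\setminus p_j$, while $a_\ell$ is an isolated vertex of $G'\setminus p_j$; thus any transversal of $G\setminus p_j$ contains $a_\ell$ strictly beyond what is needed in $G'\setminus p_j$, giving $\tau_j=\tau_j'+1$ and supplying the required unit.

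If both endpoints of $p_\ell$ are missing from $V_0(G')$, then $p_\ell$ is disjoint from every $p_h$ with $h<\ell$, so $p_\ell\setminus p_h=p_\ell$ appears in $G\setminus p_h$ as an isolated edge whose endpoints are isolated vertices of $G'\setminus p_h$. Consequently $\tau_h=\tau_h'+1$ for every $h<\ell$, and the right-hand sum is at least $\ell-1\ge 2$, precisely where the hypothesis $\ell\ge 3$ is used. The principal obstacle lies in the middle case: one has to locate an index $j<\ell$ with $b_\ell\in p_j$ and verify that $a_\ell$ is genuinely isolated in $G'\setminus p_j$, so that the loop it carries in $G\setminus p_j$ forces $\tau_j$ to drop by exactly one. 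The two extreme cases are then essentially automatic.
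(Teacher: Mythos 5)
Your proof is correct, and its skeleton is the same as the paper's: the trichotomy on $|V_0(G)\setminus V_0(G')|\in\{0,1,2\}$ is exactly the paper's case split into $p_\ell$ being neither isolated nor pendant, pendant, or isolated, and your pendant-edge argument (the truncation $p_\ell\setminus p_j=\{a_\ell\}$ is a loop forcing $a_\ell$ into every transversal of $G\setminus p_j$, while $a_\ell$ covers nothing else, so $\tau(G\setminus p_j)=\tau(G'\setminus p_j)+1$) is precisely the one in the paper. The only genuine divergence is the isolated-edge case: the paper takes a detour through $\tau$-criticality, showing $\tau(G)=m+1$, that $G$ and $G'$ are both $\tau$-critical, hence $w(G)=0$ and $w(G')=|E'|\geq m\geq 2$; you instead observe directly that the untouched isolated edge $p_\ell$ sits inside every $G\setminus p_h$ and its endpoints cover nothing else, so $\tau_h-\tau_h'=1$ for every $h<\ell$ and the weight gain is $\ell-1\geq 2$. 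Your version is shorter, avoids the criticality machinery, and uses the stated hypothesis $\ell\geq 3$ where the paper leans on $m\geq 2$; both are valid in the range where the lemma is applied.
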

\begin{proof} \pont
Let $G^\prime$. For $1\leq i\leq \ell-1$, define $w^\prime(p_i)=m-\tau(G^\prime\setminus p_i)$.  Since a transversal set in
$G\setminus p_i$
is also a transversal set in $G^\prime\setminus p_i$, we have 
$\tau(G\setminus p_i)\geq \tau(G^\prime\setminus p_i)$. Then we obtain
$$w(G^\prime)=\sum\limits_{j=1}^{\ell-1} w^\prime(p_j)=\sum\limits_{j=1}^{\ell-1} (m-\tau(G^\prime\setminus p_i))\geq \sum\limits_{j=1}^{\ell-1} (m-\tau(G\setminus p_i))=w(G).
$$
If $p_\ell$ is neither an isolated edge  nor a pendant edge, then 
$V_0(G^\prime)=V_0(G)$, therefore
$ |V_0(G)|+w(G)\leq |V_0(G^\prime)|+w(G^\prime)$ follows.

If $p_\ell$ is an isolated edge in $G$,
then $|V_0(G^\prime)|=|V_0(G)|-2$.
Because $w(p_\ell)=0$, we have
$\tau(G)=\tau(G\setminus p_\ell)+1=m-w(p_\ell)+1=m+1$. Moreover, $G$ is $\tau$-critical, since by (\ref{subtau}), 
$$\tau(G)-1\leq \tau(G-p_j)\leq m=\tau(G)-1,$$
for every $1\leq j\leq\ell$. Then $w(p_j)=m-\tau(G\setminus p_j)=0$ thus $w(G)=0$. Observe that $G^\prime$ is $\tau$-critical, as well, with $\tau(G^\prime)=\tau(G)-1=m$, therefore, $w^\prime(p_j)=m-\tau(G^\prime\setminus p_j) =m-\tau(G^\prime- p_j)=1$. Then by 
the definition of the edge waights, we have
$w(G^\prime)=\sum\limits_{j=1}^{\ell-1} 1=|E^\prime|.$
Since $|E^\prime|\geq m\geq 2$, we obtain  
 $ |V_0(G)|+w(G)$ $=|V_0(G)|\leq (|V_0(G)|-2) + |E^\prime|$ $
= |V_0(G^\prime)|+w(G^\prime).$

If $p_\ell$ is a pendant edge, then $|V_0(G^\prime)|=|V_0(G)|-1$.
For any $p_i\cap p_\ell\neq\varnothing$, 
$\tau(G^\prime\setminus p_i)=\tau(G\setminus p_i)-1$, because a minimum transversal in $G\setminus p_i$ must use a vertex just for $p_\ell$, and this vertex is not required in  a minimum transversal of
$G^\prime\setminus p_i$. Hence $w^\prime(p_i)=w(p_i)+1$, which implies 
$w(G^\prime)\geq w(G)+1$, and 
 $ |V_0(G)|+w(G)$ $=(|V_0(G)|-1) +(w(G)+1)$ $\leq 
|V_0(G^\prime)|+w(G^\prime)$ 
 follows. \qed
 \end{proof}
 
 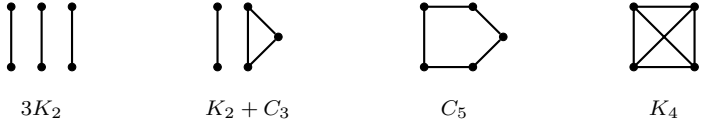
\begin{figure}[http]
 \tikzstyle{A} = [circle, draw=black!, minimum width=1pt, fill=white,inner sep=1pt]
\tikzstyle{P} = [circle, draw=black!, minimum width=0pt, inner sep=1pt, fill=black]
\begin{center}
\begin{tikzpicture}[scale=.8]
\foreach \x in{1,1.5,2}\foreach \y in{.5,1.5}\node[P] ()at(\x,\y){};
\foreach \x in{1,1.5,2}\draw[line width=.8pt](\x,.5)--(\x,1.5);\node()at(1.5,-.2){$3K_2$};
\end{tikzpicture}
\hskip1.5cm
\begin{tikzpicture}[scale=.8]

\foreach \x in{1,1.5}\foreach \y in{.5,1.5}\node[P] ()at(\x,\y){};
\foreach \x in{1,1.5}\draw[line width=.8pt](\x,.5)--(\x,1.5);
\node[P] (c)at(2,1){};
\draw[line width=.8pt](1.5,.5)--(c)--(1.5,1.5);
\node()at(1.5,-.2){$K_2+C_3$};
\end{tikzpicture}
\hskip1.5cm
\begin{tikzpicture}[scale=.8]

\foreach \x in{1,1.8}\foreach \y in{.5,1.5}\node[P] ()at(\x,\y){};
\foreach \x in{1}\draw[line width=.8pt](\x,.5)--(\x,1.5);
\draw[line width=.8pt](1,1.5)--(1.8,1.5) (1,.5)--(1.8,.5);
\node[P] (c)at(2.3,1){};
\draw[line width=.8pt](1.8,.5)--(c)--(1.8,1.5);
\node()at(1.5,-.2){$C_5$};
\end{tikzpicture}
\hskip1.5cm
\begin{tikzpicture}[scale=.8]

\foreach \x in{1,2}\foreach \y in{.5,1.5}\node[P] ()at(\x,\y){};
\foreach \x in{1,2}\draw[line width=.8pt](\x,.5)--(\x,1.5);
\draw[line width=.8pt](1,1.5)--(2,.5) --(1,.5)(1,1.5)--(2,1.5)--(1,.5);
\node()at(1.5,-.2){$K_4$};
\end{tikzpicture}

\caption{$\tau$-critical graphs $G$ with $\tau(G)=3$ }
\label{tau23}
\end{center}
\end{figure}

Because $2\leq\tau(G)\leq m+1$, Steps 1 and 2 imply 
$n\leq {m+2\choose 2}$, for $m=2,3$.
The case
 $m=4$, $\tau(G)=3$ remains to discuss.
By Lemma \ref{w=0} we may assume that all edges of $G$ have positive weights. Since $\tau(G)=3$, 
$G$ contains one of the four $\tau$-critical  subgraphs in Fig.\ref{tau23}.\vspace{.5em}

Case 1: $G$ cotains a $K_4$. If $G$ had two more non-isolated vertices, $a$ and $b$, then $ab \notin E$, since $\tau(G)=3$. One edge from each of $a$ and $b$ to $K_4$ would result in an edge of zero weight. Hence $|V_0|=5$ or $4$, and $G$ can be obtained starting with 
a $K_5$, and successively removing edges incident with a common vertex, say $a$. The corresponding configurations
are depicted in Fig.\ref{4clique} (unlabeled edges have  weight $1$).

\begin{figure}[htp]
 \begin{center}
 \tikzstyle{A} = [circle, draw=black!, minimum width=1pt, fill=white,inner sep=1pt]
\tikzstyle{P} = [circle, draw=black!, minimum width=0pt, inner sep=1pt, fill=black]
\begin{tikzpicture}
\node[P] (a)at(1,.5){};\node[P] (d)at(1.05,1.15){};
\node[P] (b)at(.2,1.4){};\node[P] (c)at(1.8,1.4){};
\draw[line width=.8pt](a)--(c) --(b)--(d)--(a)--(b) (c)--(d);
\node[P,label=above:$a$] (e)at(1.5,2.2){};
\draw[line width=.8pt](d)--(e)--(b) (e)--(c);
\node()at(1,0){$n\leq14$};
\end{tikzpicture}\hskip1cm
\begin{tikzpicture}
\node[P] (a)at(1,.5){};\node[P] (d)at(1.05,1.15){};
\node[P] (b)at(.2,1.4){};\node[P] (c)at(1.8,1.4){};
\draw[line width=.8pt](a)--(c) --(b)--(d)--(a)--(b) (c)--(d);
\node[P,label=above:$a$] (e)at(1.5,2.2){};
\draw[line width=.8pt](d)--(e)--(b);
\node()at(1,0){$n\leq13$};
\end{tikzpicture}\hskip1cm
\begin{tikzpicture}[scale=1.4]

\node[P,label=below:$$] (a)at(1,.5){};
\node[P,label=above:$$] (d)at(1.05,1.15){};
\node[P,label=above:$$] (b)at(.2,1.6){};
\node[P,label=above:$$] (c)at(1.8,1.6){};

\draw[line width=.8pt](a)--(c) --(b)--(d)--(a)--(b) (c)--(d);
\node()at(1.5,.95){\small{2}};
\node()at(1.2,1.4){\small{2}};
\node()at(.9,.95){\small{2}};
\node[P,label=right:$a$] (e)at(1.3,2.2){};
\draw[line width=.8pt](e)--(b);
\node()at(1,0){$n\leq15$};
\end{tikzpicture}\hskip1cm
\begin{tikzpicture}[scale=1.4]
\node[P,label=below:$d$] (d)at(1,.5){};
\node[P,label=above:$e$] (e)at(1.05,1.15){};
\node[P,label=above:$b$] (b)at(.2,1.6){};
\node[P,label=above:$c$] (c)at(1.8,1.6){};
\draw[line width=.8pt](a)--(c) --(b)--(e)--(a)--(b) (c)--(e);
\node()at(0.6,.8){\small{2}};\node()at(1.4,.8){\small{2}};
\node()at(0.7,1.2){\small{2}};\node()at(1.33,1.2){\small{2}};
\node()at(.9,.9){\small{2}};\node()at(.95,1.73){\small{2}};
\node()at(1,0){$n\leq16$};
\end{tikzpicture}
\caption{$G$ has a $K_4$}
\label{4clique}
\end{center}
\end{figure}
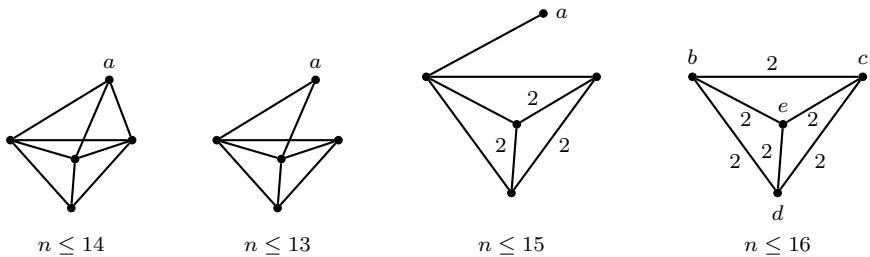

The  rightmost candidate in
 Fig.\ref{4clique} can be rejected as follows.
If $n= 16>{4+2\choose 2}$, then  
$Z_i=M_i\setminus V_0$, $1\leq i\leq 6$, are  pairwise disjoint $2$-element sets.
Let $V_0=\{b,c,d,e\}$ and $Z_i=\{i,i^\prime\}$. The corresponding $(2,m)$-system is uniquely determined as follows:
$ p_1=bc$, $p_2=bd, p_3=be,  p_4=cd,p_5=ce,
p_6=de$, and 
${M_1}=\{d,e,1,1^\prime\},$
${M_2}=\{c,e,2,2^\prime\},$
${M_3}=\{c,d,3,3^\prime\}, $
${M_4}=\{b,e,4,4^\prime\},$ 
${M_5}=\{b,d,5,5^\prime\},$
${M_6}=\{b,c,6,6^\prime\}. $
 
We eliminate the `fake' candidate by applying a general observation, the {\it Triples test}, as follows. If a set $f\subset V$, $|f|=3$, is such that $f\cap M_i=\varnothing$ for some $1\leq i\leq \ell$, then $f\subset N_i$ and since $N_i$ is a clique of $H$, we have $f\in E$.

Let $N=V\setminus\{c,d,e\}$;  $|N|=n-3=n-(m-1)=k+1$.  Notice that the four sets $M_i\setminus\{c,d,e\}$,
$i=1,2,3,4$, are pairwise disjoint, thus 
every $3$-element set $f\subset N$ is disjoint from some $M_i$, $1\leq i\leq 6$. Hence, by the Triples test,   $f\in E$ for every $f\subset N$, $|f|=3$. 
Therefore, $N$ induces a clique of order $k+1$, a contradiction. \vspace{.5em}

\begin{figure}[htp]
 \begin{center}
 \tikzstyle{A} = [circle, draw=black!, minimum width=1pt, fill=white,inner sep=1pt]
\tikzstyle{P} = [circle, draw=black!, minimum width=0pt, inner sep=1pt, fill=black]
\begin{tikzpicture}
\node[P,label=left:$$] (a)at(1,.5){};
\node[P,label=right:$$] (b)at(1.8,.5){};
\node[P,label=left:$$] (e)at(.8,1.15){};
\node[P,label=right:$$] (c)at(2,1.15){};
\node[P,label=above:$$] (d)at(1.4,1.7){};
\node()at(1.4,.36){\small{2}};
\node()at(.8,.8){\small{2}};
\node()at(2,.8){\small{2}};
\node()at(1,1.55){\small{2}};
\node()at(1.8,1.55){\small{2}};
\draw[line width=.8pt](a)--(b) --(c)--(d)--(e)--(a);
\node()at(1.4,0){$n\leq15$};
\end{tikzpicture}
\hskip.5cm
\begin{tikzpicture}
\node[P] (a)at(1,.5){};\node[P] (b)at(1.8,.5){};
\node[P] (e)at(.8,1.15){};\node[P] (c)at(2,1.15){};
\node[P] (d)at(1.4,1.7){};\node[P] (f)at(2.2,2){};
\node()at(.75,.8){\small{2}};\node()at(2.05,.8){\small{2}};
\draw[line width=.8pt](a)--(b) --(c)--(d)--(e)--(a)
(d)--(f);
\node()at(1.4,0){$n\leq14$};
\end{tikzpicture}
\hskip1cm
\begin{tikzpicture}
\node[P] (a)at(1,.5){};\node[P] (b)at(1.8,.5){};
\node[P] (e)at(.8,1.15){};\node[P] (c)at(2,1.15){};
\node[P] (d)at(1.4,1.7){};\node[P] (f)at(2.2,2){};
\draw[line width=.8pt](a)--(b) --(c)--(d)--(e)--(a)
(d)--(f)--(c);
\node()at(1.4,0){$n\leq13$};
\end{tikzpicture}
\hskip1cm
\begin{tikzpicture}
\node[P] (a)at(1,.5){};\node[P] (b)at(1.8,.5){};
\node[P] (e)at(.8,1.15){};\node[P] (c)at(2,1.15){};
\node[P] (d)at(1.4,1.7){};\node[P] (f)at(2.2,2){};
\draw[line width=.8pt](a)--(b) --(c)--(d)--(e)--(a)
(c)--(e) (d)--(f);
\node()at(1.4,0){$n\leq13$};
\end{tikzpicture}

\begin{tikzpicture}
\node[P] (a)at(1,.5){};\node[P] (b)at(1.8,.5){};
\node[P] (e)at(.8,1.15){};\node[P] (c)at(2,1.15){};
\node[P] (d)at(1.4,1.7){};\node()at(1.4,.32){\small{2}};
\node()at(.95,1.55){\small{2}};
\node()at(1.85,1.55){\small{2}};
\draw[line width=.8pt](a)--(b) --(c)--(d)--(e)--(a)
(e)--(c);
\node()at(1.4,0){$n\leq14$};
\end{tikzpicture}
\hskip1.2cm
\begin{tikzpicture}
\node[P] (a)at(1,.5){};\node[P] (b)at(1.8,.5){};
\node[P] (e)at(.8,1.15){};\node[P] (c)at(2,1.15){};
\node[P] (d)at(1.4,1.7){};
\node()at(1,1.6){\small{2}};\node()at(1.4,.32){\small{2}};
\draw[line width=.8pt](a)--(b) --(c)--(d)--(e)--(a)
(e)--(c)--(a);
\node()at(1.4,0){$n\leq14$};
\end{tikzpicture}
\hskip1.2cm
\begin{tikzpicture}
\node[P] (a)at(1,.5){};\node[P] (b)at(1.8,.5){};
\node[P] (e)at(.8,1.15){};\node[P] (c)at(2,1.15){};
\node[P] (d)at(1.4,1.7){};
\node()at(1,1.6){\small{2}};
\draw[line width=.8pt](a)--(b) --(c)--(d)--(e)--(a)
(c)--(e) (a)--(d);
\node()at(1.4,0){$n\leq13$};
\end{tikzpicture}
\hskip1.2cm
\begin{tikzpicture}
\node[P] (a)at(1,.5){};\node[P] (b)at(1.8,.5){};
\node[P] (e)at(.8,1.15){};\node[P] (c)at(2,1.15){};
\node[P] (d)at(1.4,1.7){};
\draw[line width=.8pt](a)--(b) --(c)--(d)--(e)--(a)
(c)--(e) (a)--(d)--(b);
\node()at(1.4,0){$n\leq13$};
\end{tikzpicture}
\caption{$G$ has a $5$-cycle and no $K_4$}
\label{5cycle}
\end{center}
\end{figure}
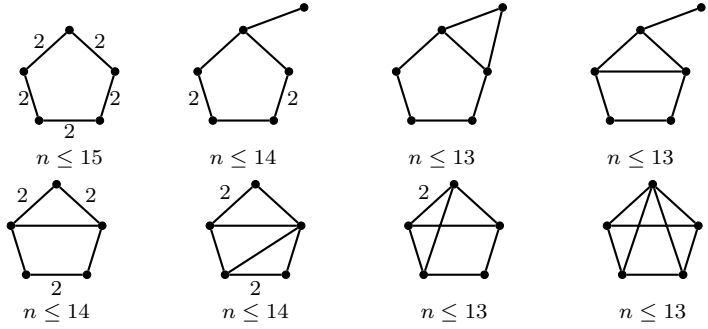

Case 2:  $G$ has a $5$-cycle and no $K_4$. Since $\tau(G)=3$, all edges of $G$ are incident with the $5$-cycle. Fig.\ref{5cycle} lists all configurations with no zero edge weights (unlabeled edges have weights $1$). All candidates have at most 
 ${4+2\choose 2}=15$ vertices.\vspace{.5em}

 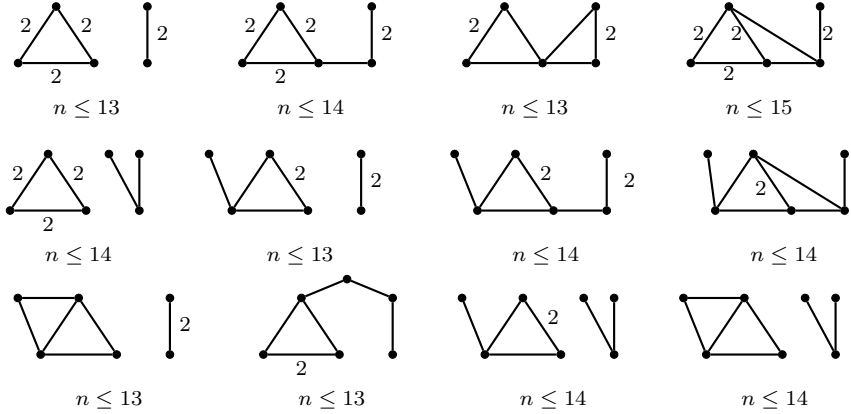
\begin{figure}[htp]
 \begin{center}
 \tikzstyle{A} = [circle, draw=black!, minimum width=1pt, fill=white,inner sep=1pt]
\tikzstyle{P} = [circle, draw=black!, minimum width=0pt, inner sep=1pt, fill=black]
\begin{tikzpicture}
\node[P] (a)at(.9,.5){};\node[P] (b)at(1.9,.5){};
\node[P] (c)at(1.4,1.25){};
\node[P] (d)at(2.6,.5){};\node[P] (e)at(2.6,1.25){};
\node()at(1.4,.32){\small{2}};
\node()at(1,1){\small{2}};
\node()at(1.8,1){\small{2}};
\node()at(2.8,.9){\small{2}};
\draw[line width=.8pt](a)--(b) --(c)--(a) (d)--(e);
\node()at(1.8,-.1){$n\leq13$};
\end{tikzpicture}
\hskip.7cm
\begin{tikzpicture}
\node[P] (a)at(.9,.5){};\node[P] (b)at(1.9,.5){};
\node[P] (c)at(1.4,1.25){};
\node[P] (d)at(2.6,.5){};\node[P] (e)at(2.6,1.25){};
\node()at(1.4,.32){\small{2}};
\node()at(1,1){\small{2}};
\node()at(1.8,1){\small{2}};
\node()at(2.8,.9){\small{2}};
\draw[line width=.8pt](a)--(b) --(c)--(a) (b)--(d)--(e);
\node()at(1.8,-.1){$n\leq14$};
\end{tikzpicture}
\hskip.7cm
\begin{tikzpicture}
\node[P] (a)at(.9,.5){};\node[P] (b)at(1.9,.5){};
\node[P] (c)at(1.4,1.25){};
\node[P] (d)at(2.6,.5){};\node[P] (e)at(2.6,1.25){};
\node()at(1,1){\small{2}};
\node()at(2.8,.9){\small{2}};
\draw[line width=.8pt](a)--(b) --(c)--(a) (b)--(d)--(e)--(b);
\node()at(1.8,-.1){$n\leq13$};
\end{tikzpicture}
\hskip.7cm
\begin{tikzpicture}
\node[P] (a)at(.9,.5){};\node[P] (b)at(1.9,.5){};
\node[P] (c)at(1.4,1.25){};
\node[P] (d)at(2.6,.5){};\node[P] (e)at(2.6,1.25){};
\node()at(1.4,.36){\small{2}};
\node()at(1,.9){\small{2}};
\node()at(1.5,.9){\small{2}};
\node()at(2.7,.9){\small{2}};
\draw[line width=.8pt](a)--(b) --(c)--(a) (b)--(d)--(e) (c)--(d);
\node()at(1.8,-.1){$n\leq15$};
\end{tikzpicture}
\vskip.3cm

\begin{tikzpicture}
\node[P] (a)at(.9,.5){};\node[P] (b)at(1.9,.5){};
\node[P] (c)at(1.4,1.25){};
\node[P] (d)at(2.6,.5){};\node[P] (e)at(2.2,1.25){};
\node[P] (f)at(2.6,1.25){};
\node()at(1.4,.32){\small{2}};
\node()at(1,1){\small{2}};
\node()at(1.8,1){\small{2}};
\draw[line width=.8pt](a)--(b) --(c)--(a) (f)--(d)--(e);
\node()at(1.8,-.1){$n\leq14$};
\end{tikzpicture}
\hskip.7cm
\begin{tikzpicture}
\node[P] (g)at(.6,1.25){};
\node[P] (a)at(.9,.5){};\node[P] (b)at(1.9,.5){};
\node[P] (c)at(1.4,1.25){};
\node[P] (d)at(2.6,.5){};\node[P] (e)at(2.6,1.25){};
\node()at(1.8,1){\small{2}};
\node()at(2.8,.9){\small{2}};
\draw[line width=.8pt](g)--(a)--(b) --(c)--(a) (d)--(e);
\node()at(1.8,-.1){$n\leq13$};
\end{tikzpicture}
\hskip.7cm
\begin{tikzpicture}
\node[P] (g)at(.6,1.25){};
\node[P] (a)at(.9,.5){};\node[P] (b)at(1.9,.5){};
\node[P] (c)at(1.4,1.25){};
\node[P] (d)at(2.6,.5){};\node[P] (e)at(2.6,1.25){};
\node()at(1.8,1){\small{2}};
\node()at(2.9,.9){\small{2}};
\draw[line width=.8pt](g)--(a)--(b) --(c)--(a) 
(d)--(e) (b)--(d);
\node()at(1.8,-.1){$n\leq14$};
\end{tikzpicture}
\hskip.7cm
\begin{tikzpicture}
\node[P] (g)at(.8,1.25){};
\node[P] (a)at(.9,.5){};\node[P] (b)at(1.9,.5){};
\node[P] (c)at(1.4,1.25){};
\node[P] (d)at(2.6,.5){};\node[P] (e)at(2.6,1.25){};
\node()at(1.5,.8){\small{2}};
\draw[line width=.8pt](g)--(a)--(b) --(c)--(a) 
(b)--(d)--(e) (c)--(d);
\node()at(1.8,-.1){$n\leq14$};
\end{tikzpicture}

\begin{tikzpicture}
\node[P] (g)at(.6,1.25){};
\node[P] (a)at(.9,.5){};\node[P] (b)at(1.9,.5){};
\node[P] (c)at(1.4,1.25){};
\node[P] (d)at(2.6,.5){};\node[P] (e)at(2.6,1.25){};
\node()at(2.8,.9){\small{2}};
\draw[line width=.8pt](a)--(b) --(c)--(a) (d)--(e)
(a)--(g)--(c);
\node()at(1.8,-.1){$n\leq13$};
\end{tikzpicture}
\hskip.7cm
\begin{tikzpicture}
\node[P] (g)at(2,1.5){};
\node[P] (a)at(.9,.5){};\node[P] (b)at(1.9,.5){};
\node[P] (c)at(1.4,1.25){};
\node[P] (d)at(2.6,.5){};\node[P] (e)at(2.6,1.25){};
\node()at(1.4,.32){\small{2}};
\draw[line width=.8pt](a)--(b) --(c)--(a) 
(d)--(e)--(g)--(c);
\node()at(1.8,-.1){$n\leq13$};
\end{tikzpicture}
\hskip.7cm
\begin{tikzpicture}
\node[P] (g)at(.6,1.25){};
\node[P] (a)at(.9,.5){};\node[P] (b)at(1.9,.5){};
\node[P] (c)at(1.4,1.25){};
\node[P] (d)at(2.6,.5){};\node[P] (e)at(2.2,1.25){};
\node[P] (f)at(2.6,1.25){};
\node()at(1.8,1){\small{2}};
\draw[line width=.8pt](g)--(a)--(b) --(c)--(a) (f)--(d)--(e);
\node()at(1.8,-.1){$n\leq14$};
\end{tikzpicture}
\hskip.7cm
\begin{tikzpicture}
\node[P] (g)at(.6,1.25){};
\node[P] (a)at(.9,.5){};\node[P] (b)at(1.9,.5){};
\node[P] (c)at(1.4,1.25){};
\node[P] (d)at(2.6,.5){};\node[P] (e)at(2.2,1.25){};
\node[P] (f)at(2.6,1.25){};
\draw[line width=.8pt](a)--(b) --(c)--(a) (f)--(d)--(e)
(a)--(g)--(c);
\node()at(1.8,-.1){$n\leq14$};
\end{tikzpicture}
\caption{$G$ has a triangle and no $5$-cycle or $K_4$}
\label{triangle}
\end{center}
\end{figure}

Case 3: $G$ has a triangle, it has no $5$-cycle and no $K_4$. The $\tau$-critical subgraph in $G$ is $K_2+K_3$. To get all candidates  at most three edges and one new vertex can be added to  $K_2+K_3$ by keeping the transversal number $3$ and creating neither a $5$-cycle nor a $K_4$. These graphs are  listed in Fig.\ref{triangle},
all have at most $15$ vertices.
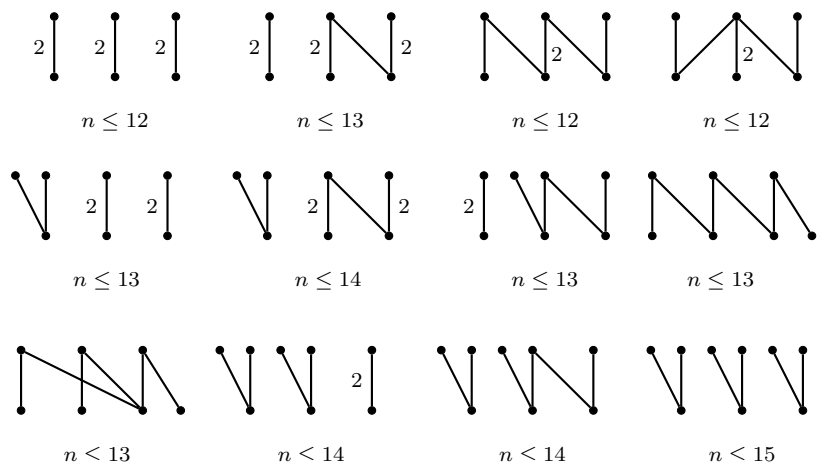
\begin{figure}[http]
 \begin{center}
 \tikzstyle{A} = [circle, draw=black!, minimum width=1pt, fill=white,inner sep=1pt]
\tikzstyle{P} = [circle, draw=black!, minimum width=0pt, inner sep=1pt, fill=black]
\begin{tikzpicture}
\node[P] (a)at(.5,.5){};\node[P] (c)at(1.3,.5){};
\node[P] (e)at(2.1,.5){};
\node[P] (b)at(.5,1.3){};\node[P] (d)at(1.3,1.3){};
\node[P] (f)at(2.1,1.3){};
\node()at(.3,.9){\small{2}};
\node()at(1.1,.9){\small{2}};
\node()at(1.9,.9){\small{2}};
\draw[line width=.8pt](a)--(b) (c)--(d) (f)--(e);
\node()at(1.3,-.1){$n\leq12$};
\end{tikzpicture}
\hskip.7cm
\begin{tikzpicture}
\node[P] (a)at(.5,.5){};\node[P] (c)at(1.3,.5){};
\node[P] (e)at(2.1,.5){};
\node[P] (b)at(.5,1.3){};\node[P] (d)at(1.3,1.3){};
\node[P] (f)at(2.1,1.3){};
\node()at(.3,.9){\small{2}};
\node()at(1.1,.9){\small{2}};
\node()at(2.3,.9){\small{2}};
\draw[line width=.8pt](a)--(b) (c)--(d) (f)--(e) (d)--(e);
\node()at(1.3,-.1){$n\leq13$};
\end{tikzpicture}
\hskip.7cm
\begin{tikzpicture}
\node[P] (a)at(.5,.5){};\node[P] (c)at(1.3,.5){};
\node[P] (e)at(2.1,.5){};
\node[P] (b)at(.5,1.3){};\node[P] (d)at(1.3,1.3){};
\node[P] (f)at(2.1,1.3){};
\node()at(1.45,.8){\small{2}};
\draw[line width=.8pt](a)--(b) --(c)--(d)--(e) (f)--(e);
\node()at(1.3,-.1){$n\leq12$};
\end{tikzpicture}
\hskip.7cm
\begin{tikzpicture}
\node[P] (a)at(.5,.5){};\node[P] (c)at(1.3,.5){};
\node[P] (e)at(2.1,.5){};
\node[P] (b)at(.5,1.3){};\node[P] (d)at(1.3,1.3){};
\node[P] (f)at(2.1,1.3){};
\node()at(1.45,.8){\small{2}};
\draw[line width=.8pt](d)--(a)--(b) (c)--(d)--(e) (f)--(e);
\node()at(1.3,-.1){$n\leq12$};
\end{tikzpicture}

\vskip.4cm
\begin{tikzpicture}
\node[P] (a)at(.5,.5){};\node[P] (c)at(1.3,.5){};
\node[P] (e)at(2.1,.5){};
\node[P] (b)at(.5,1.3){};\node[P] (d)at(1.3,1.3){};
\node[P] (f)at(2.1,1.3){};
\node[P] (b1)at(.1,1.3){};
\node()at(1.1,.9){\small{2}};
\node()at(1.9,.9){\small{2}};
\draw[line width=.8pt](b1)--(a)--(b) 
(c)--(d) (e)--(f);
\node()at(1.3,-.1){$n\leq13$};
\end{tikzpicture}
\hskip.7cm
\begin{tikzpicture}
\node[P] (a)at(.5,.5){};\node[P] (c)at(1.3,.5){};
\node[P] (e)at(2.1,.5){};
\node[P] (b)at(.5,1.3){};\node[P] (d)at(1.3,1.3){};
\node[P] (f)at(2.1,1.3){};
\node[P] (b1)at(.1,1.3){};
\node()at(1.1,.9){\small{2}};
\node()at(2.3,.9){\small{2}};
\draw[line width=.8pt](b1)--(a)--(b) (c)--(d) (f)--(e) (d)--(e);
\node()at(1.3,-.1){$n\leq14$};
\end{tikzpicture}
\hskip.4cm
\begin{tikzpicture}
\node[P] (a)at(.5,.5){};\node[P] (c)at(1.3,.5){};
\node[P] (e)at(2.1,.5){};
\node[P] (b)at(.5,1.3){};\node[P] (d)at(1.3,1.3){};
\node[P] (f)at(2.1,1.3){};
\node[P] (d1)at(.9,1.3){};
\node()at(.3,.9){\small{2}};
\draw[line width=.8pt](a)--(b) (d1)--(c)--(d) (f)--(e) (d)--(e);
\node()at(1.3,-.1){$n\leq13$};
\end{tikzpicture}
\hskip.4cm
\begin{tikzpicture}
\node[P] (a)at(.5,.5){};\node[P] (c)at(1.3,.5){};
\node[P] (e)at(2.1,.5){};\node[P] (g)at(2.6,.5){};
\node[P] (b)at(.5,1.3){};\node[P] (d)at(1.3,1.3){};
\node[P] (f)at(2.1,1.3){};
\draw[line width=.8pt](a)--(b) --(c)--(d)--(e) (g)--(f)--(e);
\node()at(1.4,-.1){$n\leq13$};
\end{tikzpicture}
\vskip.4cm
\begin{tikzpicture}
\node[P] (a)at(.5,.5){};\node[P] (c)at(1.3,.5){};
\node[P] (e)at(2.1,.5){};\node[P] (g)at(2.6,.5){};
\node[P] (b)at(.5,1.3){};\node[P] (d)at(1.3,1.3){};
\node[P] (f)at(2.1,1.3){};
\draw[line width=.8pt](a)--(b) (c)--(d)--(e) (g)--(f)--(e)--(b);
\node()at(1.5,-.1){$n\leq13$};
\end{tikzpicture}
\hskip.3cm
\begin{tikzpicture}
\node[P] (a)at(.5,.5){};\node[P] (c)at(1.3,.5){};
\node[P] (e)at(2.1,.5){};
\node[P] (b)at(.5,1.3){};\node[P] (d)at(1.3,1.3){};
\node[P] (f)at(2.1,1.3){};
\node[P] (b1)at(.1,1.3){};\node[P] (d1)at(.9,1.3){};
\node()at(1.9,.9){\small{2}};
\draw[line width=.8pt](b1)--(a)--(b) 
(d1)--(c)--(d) (e)--(f);
\node()at(1.3,-.1){$n\leq14$};
\end{tikzpicture}
\hskip.7cm
\begin{tikzpicture}
\node[P] (a)at(.5,.5){};\node[P] (c)at(1.3,.5){};
\node[P] (e)at(2.1,.5){};
\node[P] (b)at(.5,1.3){};\node[P] (d)at(1.3,1.3){};
\node[P] (f)at(2.1,1.3){};
\node[P] (b1)at(.1,1.3){};\node[P] (d1)at(.9,1.3){};;
\draw[line width=.8pt](b1)--(a)--(b) (d1)--(c)--(d) (f)--(e) (d)--(e);
\node()at(1.3,-.1){$n\leq14$};
\end{tikzpicture}
\hskip.5cm
\begin{tikzpicture}
\node[P,label=below:$$] (a)at(.5,.5){};
\node[P,label=above:$$] (a1)at(.1,1.3){};
\node[P,label=above:$$] (a2)at(.5,1.3){};
\node[P,label=below:$$] (b)at(1.3,.5){};
\node[P,label=above:$$] (b1)at(.9,1.3){};
\node[P,label=above:$$] (b2)at(1.3,1.3){};
\node[P,label=above:$$] (c2)at(2.1,1.3){};
\node[P,label=below:$$] (c)at(2.1,.5){};
\node[P,label=above:$$] (c1)at(1.7,1.3){};
\draw[line width=.8pt](a1)--(a)--(a2) 
(b1)--(b)--(b2) (c1)--(c)--(c2);
\node()at(1.3,-.1){$n\leq15$};
\end{tikzpicture}
\caption{$G$ has  no cycle}
\label{nocycle}
\end{center}
\end{figure}

\section{}
The proof of the case $m=4$ in Proposition \ref{main} shows that 
a $3$-uniform hypergraph on $n={m+2\choose 2}=15$ vertices satisfying the condition of the proposition has its private pairs graph $G$ among three candidates;  furthermore, there is only one candidate, the $5$-cycle in Case 2, that passes the Triples test. 
As a corollary, we obtain a unique $3$--uniform hypergraph $H$ of order $15$ with $\omega(H)=11$ such that its all $11$-cliques have no common 
vertex. This hypergraph is specified 
as follows.

Let $X=\{x_1,\dots,x_{10}\}$,  $Y=\{y_1,\dots,y_{5}\}$, and let $p_i=\{y_i,y_{i+1}\}$, $1\leq i\leq 5$ (where $y_6=y_1$).
For $i=1,\dots,  10$ set $N_i=(X\setminus\{x_i\})\cup p_i$. The family 
$\F=\{N_1,\dots,N_{10}\}$ defines the optimal $3$-uniform hypergraph $H$ on vertex set $X\cup Y$ with edge set including all triples contained by some $N_i$.  It is straightforward to check that the maximum cliques of $H$ are precisely the members of $\F$, and each vertex,  $x_i$ or $y_j$, is avoided by some member of $\F$.


\begin{thebibliography}{99}
\bibitem{BB} B. Bollob\'as, On generalized graphs, Acta Math. Acad. Sci. Hungar. 16 (1965)
447--452.

\bibitem{EG} P. Erd\H{o}s and T. Gallai, On the maximal number of vertices representing the edges of a graph, K\"ozl. MTA Mat. Kutat\'o Int. Budapest 6 (1961) 181--203.

\bibitem{GYLT} A. Gy\'arf\'as, J. Lehel, and  Zs. Tuza, Upper bound on the order of $\tau$-critical hypergraphs. J. Combin. Theory Ser. B 33 (1982) 161--165. 

\bibitem{GYL}
A. Gy\'arf\'as, and J. Lehel, Order plus size of $\tau$-critical graphs. J Graph
Theory. 2020;1--2.

\bibitem{JKP} A. Jobson, A. K\'ezdy, T. Pervenecki, 
On a conjecture of Szemer\'edi and Petruska. arXiv:1904.04921v2
(2019)

\bibitem{eck} A. Jobson, A. K\'ezdy, and J. Lehel,  Eckhoff's problem on planar convex sets. 2019. Note.

\bibitem{petrus} A. Jobson, A. K\'ezdy, J. Lehel, T. Pervenecki, and G. T\'oth, Petruska's question on planar convex sets. 
Discrete Math. 343 (2020) 13pp.
   
\bibitem{SzP} E. Szemer\'edi, and G. Petruska, On a combinatorial problem I. Studia Sci. Math. Hungar. 7 (1972) 363--374.

\bibitem{Tu85} Zs. Tuza, Critical hypergraphs and intersecting set-pair systems. J. Combin. Theory Ser. B 39 (1985) 134--145.

\end{thebibliography}
\end{document}